\documentclass[draft]{amsproc}
\usepackage{amssymb}
\usepackage{graphicx}
\usepackage{euscript}
\usepackage{mathrsfs} 
\usepackage{units}   
\usepackage{color}

\makeatletter
\@namedef{subjclassname@2010}{%
\textup{2010} Mathematics Subject Classification}
\makeatother

\numberwithin{equation}{section}

\newtheorem{thm}{Theorem}[section]

\newtheorem{pro}[thm]{Proposition}

\newtheorem*{thm*}{Theorem}

\theoremstyle{remark}
\newtheorem{rem}[thm]{Remark}

\newtheorem*{opq}{Question}

\theoremstyle{definition}

\DeclareMathOperator{\dess}{{\mathsf{Des}}}
\DeclareMathOperator{\dzii}{{\mathsf{Chi}}}

\DeclareMathOperator{\koo}{{\mathsf{root}}}

\DeclareMathOperator{\paa}{{\mathsf{par}}}

\newcommand*{\cbb}{\mathbb C}

\newcommand*{\card}[1]{\mathrm{card}(#1)}
\newcommand*{\des}[1]{{\dess(#1)}}
\newcommand*{\dz}[1]{{\EuScript D}(#1)}
\newcommand*{\dzi}[1]{\dzii(#1)}
\newcommand*{\dzin}[2]{\dzii^{\langle#1\rangle}(#2)}

\newcommand*{\escr}{{\mathscr{E}_V}}

\newcommand*{\Ge}{\geqslant}
\newcommand*{\hh}{\mathcal H}

\newcommand*{\kk}{\mathcal K}

\newcommand*{\lambdab}{{\boldsymbol\lambda}}

\newcommand*{\Le}{\leqslant}

\newcommand*{\pa}[1]{\paa(#1)}

\newcommand*{\slam}{S_{\boldsymbol \lambda}}
\newcommand*{\smalloplus}{\raise0pt\hbox{$\scriptscriptstyle \oplus$}}

\newcommand*{\tcal}{{\mathscr T}}

\hyphenation{equiv-al-ent-ly ortho-nor-mal pro-vided}
   \begin{document}
   \title[A hyponormal weighted shift whose
square has trivial domain] {A hyponormal weighted
shift on a directed tree whose square has trivial
domain}
   \author[Z.\ J.\ Jab{\l}o\'nski]{Zenon Jan Jab{\l}o\'nski}
\address{Instytut Matematyki, Uniwersytet Jagiello\'nski,
ul.\ \L ojasiewicza 6, PL-30348 Kra\-k\'ow, Poland}
   \email{Zenon.Jablonski@im.uj.edu.pl}
   \author[I.\ B.\ Jung]{Il Bong Jung}
   \address{Department of Mathematics, Kyungpook National
University, Daegu 702-701, Korea}
   \email{ibjung@knu.ac.kr}
   \author[J.\ Stochel]{Jan Stochel}
\address{Instytut Matematyki, Uniwersytet Jagiello\'nski,
ul.\ \L ojasiewicza 6, PL-30348 Kra\-k\'ow, Poland}
   \email{Jan.Stochel@im.uj.edu.pl}
   \thanks{Research of the first
and the third authors was supported by the MNiSzW
(Ministry of Science and Higher Education) grant NN201
546438 (2010-2013). The second author was supported by
the National Research Foundation of Korea (NRF) grant
funded by the Korea government (MEST) (No.\
R01-2008-000-20088-0).}
    \subjclass[2010]{Primary 47B37, 47B20; Secondary
47A05}
   \keywords{Directed tree, weighted shift on a
directed tree, hyponormal operator, trivial domain of
square}
   \begin{abstract}
It is proved that, up to isomorphism, there are only
two directed trees that admit a hyponormal weighted
shift with nonzero weights whose square has trivial
domain. These are precisely those enumerable directed
trees, one with root, the other without, whose every
vertex has enumerable set of successors.
   \end{abstract}
   \maketitle
   \section{Introduction} In a recent paper
\cite{b-j-j-s} a question of subnormality of unbounded
weighted shifts on directed trees has been
investigated. A criterion for subnormality of such
operators whose $C^\infty$-vectors are dense in the
underlying Hilbert space has been established (cf.\
\cite[Theorem 5.2.1]{b-j-j-s}). It has been written in
terms of consistent systems of Borel probability
measures. The assumption that the operator in question
has a dense set of $C^\infty$-vectors diminishes the
class of weighted shifts on directed trees to which
this criterion can be applied (note that the set of
all $C^\infty$-vectors of a classical, unilateral or
bilateral, weighted shift is always dense in the
underlying Hilbert space). Unfortunately, there is no
general {\em criterion} for subnormality of densely
defined operators that have small set of
$C^\infty$-vectors. The known characterizations of
subnormality of unbounded Hilbert space operators
require the existence of additional objects (like
semispectral measures, elementary spectral measures or
sequences of unbounded operators) that have to satisfy
appropriate, more or less complicated, conditions
(cf.\ \cite{bis,foi,FHSz,sz4}). Among subnormal
operators having small set of $C^\infty$-vectors, the
symmetric ones (which are always subnormal, see
\cite[Theorem 1 in Appendix I.2]{a-g}) play an
essential role. According to \cite{nai} (see also
\cite{cher}) there are closed symmetric operators
whose squares have trivial domain. Unfortunately,
symmetric weighted shifts on directed trees are
automatically bounded; the same is true for formally
normal weighted shifts on directed trees (cf.\
\cite[Proposition 3.1]{j-j-s2}).

The above discussion leads to the following problem.
   \begin{opq}
Does there exist a subnormal weighted shift on a
directed tree with nonzero weights whose square has
trivial domain?
   \end{opq}
At present, this question is unanswered (the reason
for this is explained partially in the previous
paragraph). However, as is shown in Theorem
\ref{hypdz20}, there are injective hyponormal weighted
shifts on directed trees with nonzero weights whose
squares have trivial domain. What is more, it is
proved in Theorem \ref{sqr20} that the only directed
trees admitting densely defined weighted shifts with
nonzero weights whose squares have trivial domain are
those enumerable directed trees whose every vertex has
enumerable set of successors (children).
   \section{Preliminaries} In what
follows, $\cbb$ stands for the set of all complex
numbers. Let $A$ be an operator in a complex Hilbert
space $\hh$ (all operators considered in this paper
are linear). Denote by $\dz{A}$ and $A^*$ the domain
and the adjoint of $A$ (in case it exists). A closed
densely defined operator $N$ in $\hh$ is called {\em
normal} if $N^*N=NN^*$. A densely defined operator $S$
in $\hh$ is said to be {\em subnormal} if there exists
a complex Hilbert space $\kk$ and a normal operator
$N$ in $\kk$ such that $\hh \subseteq \kk$ (isometric
embedding) and $Sh = Nh$ for all $h \in \dz S$.
Finally, a densely defined operator $S$ in $\hh$ is
called {\em hyponormal} if $\dz{S} \subseteq \dz{S^*}$
and $\|S^*f\| \Le \|Sf\|$ for all $f \in \dz S$. It is
well-known that subnormal operators are hyponormal
(but not conversely) and that hyponormal operators are
closable and their closures are hyponormal (subnormal
operators have an analogous property). We refer the
reader to \cite{b-s,weid} for basic facts on unbounded
operators, \cite{con2,s-sz1,s-sz2,s-sz3,StSz2} for the
foundations of the theory of (bounded and unbounded)
subnormal operators and \cite{ot-sch,jj1,jj2,jj3,sto}
for elements of the theory of unbounded hyponormal
operators.

Let $\tcal=(V,E)$ be a directed tree ($V$ and $E$
always stand for the sets of vertices and edges of
$\tcal$, respectively). If $\tcal$ has a root, which
will always be denoted by $\koo$, then we write
$V^\circ:=V\setminus \{\koo\}$; otherwise, we put
$V^\circ = V$. Set $\dzi u = \{v\in V\colon (u,v)\in
E\}$ for $u \in V$. If for a given vertex $u \in V$
there exists a unique vertex $v\in V$ such that
$(v,u)\in E$, then we denote it by $\pa u$. The
correspondence $u \mapsto \pa u$ is a partial function
from $V$ to $V$. For an integer $n \Ge 1$, the
$n$-fold composition of the partial function $\paa$
with itself will be denoted by $\paa^n$. Let $\paa^0$
stand for the identity map on $V$. We call $\tcal$
{\em leafless} if $V =\{u \in V \colon \dzi u \neq
\varnothing\}$. If $W \subseteq V$, we put $\dzi W =
\bigcup_{v \in W} \dzi v$ and $\des W =
\bigcup_{n=0}^\infty \dzin n W$, where $\dzin{0}{W} =
W$ and $\dzin{n+1}{W} = \dzi{\dzin{n}{W}}$ for all
integers $n\Ge 0$. For $u \in V$, we set $\dzin n
u=\dzin n {\{u\}}$ and $\des{u}=\des{\{u\}}$.
Combining equalities (2.1.3), (6.1.3) and (2.1.10) of
\cite{j-j-s} with \cite[Corollary 2.1.5]{j-j-s}, we
obtain
   \allowdisplaybreaks
   \begin{align}   \label{roz}
V^\circ & = \bigsqcup_{u\in V} \dzi u,
   \\
\label{num1} \dzin{n+1}{u} & = \bigsqcup_{v \in
\dzin{n}{u}} \dzi{v}, \quad u \in V, \, n =0,1,2,
\ldots,
   \\
\des u & = \bigsqcup_{n=0}^\infty \dzin n u, \quad
u\in V, \label{num3}
   \\
\des{u_1} \cap \des{u_2} & = \varnothing, \quad u_1,
u_2 \in \dzi{u},\, u_1 \neq u_2,\, u \in V,
\label{num3+}
   \\
V &= \des{\koo} \quad \text{provided that $\tcal$ has
a root}, \label{korzen}
   \end{align}
where the symbol $\bigsqcup$ is reserved to denote
pairwise disjoint union of sets.

Let $\ell^2(V)$ be the Hilbert space of all square
summable complex functions on $V$ equipped with the
standard inner product. For $u \in V$, we define $e_u
\in \ell^2(V)$ to be the characteristic function of
the one point set $\{u\}$. The family $\{e_u\}_{u\in
V}$ is an orthonormal basis of $\ell^2(V)$. Denote by
$\escr$ the linear span of $\{e_u\colon u \in V\}$.
Given $\lambdab = \{\lambda_v\}_{v \in V^\circ}
\subseteq \cbb$, we define the operator $\slam$ in
$\ell^2(V)$ by
   \begin{align*}
   \begin{aligned}
\dz {\slam} & = \{f \in \ell^2(V) \colon
\varLambda_\tcal f \in \ell^2(V)\},
   \\
\slam f & = \varLambda_\tcal f, \quad f \in \dz
{\slam},
   \end{aligned}
   \end{align*}
where $\varLambda_\tcal$ is the map defined on
functions $f\colon V \to \cbb$ via
   \begin{align} \label{lamtauf}
(\varLambda_\tcal f) (v) =
   \begin{cases}
\lambda_v \cdot f\big(\pa v\big) & \text{ if } v\in
V^\circ,
   \\
0 & \text{ if } v=\koo.
   \end{cases}
   \end{align}
$\slam$ is called a {\em weighted shift} on the
directed tree $\tcal$ with weights $\{\lambda_v\}_{v
\in V^\circ}$. Note that any weighted shift $\slam$ on
$\tcal$ is a closed operator (cf.\ \cite[Proposition
3.1.2]{j-j-s}). Combining Propositions 3.1.3, 3.1.7
and 3.1.10 of \cite{j-j-s}, we get the following fact
(hereafter we adopt the convention that
$\sum_{v\in\varnothing} x_v=0$).
   \begin{pro}\label{bas}
Let $\slam$ be a weighted shift on a directed tree
$\tcal$ with weights $\lambdab = \{\lambda_v\}_{v \in
V^\circ}$. Then the following assertions hold{\em :}
   \begin{enumerate}
   \item[(i)] $e_u$ is in $\dz{\slam}$ if and only if
$\sum_{v\in\dzi u} |\lambda_v|^2 < \infty$; if $e_u
\in \dz{\slam}$, then
   \begin{align} \label{eu}
\slam e_u = \sum_{v\in\dzi u} \lambda_v e_v \quad
\text{and} \quad \|\slam e_u\|^2 = \sum_{v\in\dzi u}
|\lambda_v|^2,
   \end{align}
   \item[(ii)] $\slam$ is densely defined if and only
if $\escr \subseteq \dz{\slam}$,
   \item[(iii)] $\slam$ is injective if and only if
$\tcal$ is leafless and $\sum_{v\in\dzi u}
|\lambda_v|^2 > 0$ for every $u\in V$,
   \item[(iv)] if $\overline{\dz{\slam}}=\ell^2(V)$
and $\lambda_v \neq 0$ for all $v \in V^\circ$, then
$V$ is at most countable.
   \end{enumerate}
   \end{pro}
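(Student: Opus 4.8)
The plan is to reduce every assertion to the explicit coordinatewise description of $\slam$. Fix $u\in V$ and apply \eqref{lamtauf} to $f=e_u$: since $e_u(\pa v)=1$ exactly when $\pa v=u$, i.e.\ when $v\in\dzi u$, the function $\varLambda_\tcal e_u$ equals $\lambda_v$ at each $v\in\dzi u$ and $0$ elsewhere, so that $\varLambda_\tcal e_u=\sum_{v\in\dzi u}\lambda_v e_v$ pointwise on $V$. As $\{e_v\}_{v\in V}$ is orthonormal, this function lies in $\ell^2(V)$ if and only if $\sum_{v\in\dzi u}|\lambda_v|^2<\infty$, and then its squared norm equals that sum; this is precisely \eqref{eu}, proving (i). Writing $c_u:=\sum_{v\in\dzi u}|\lambda_v|^2\in[0,\infty]$ and grouping the terms of $\|\varLambda_\tcal f\|^2=\sum_{v\in V^\circ}|\lambda_v|^2|f(\pa v)|^2$ according to the parent of each $v\in V^\circ$ via the disjoint decomposition \eqref{roz}, I obtain the domain description
\[
\dz{\slam}=\Big\{f\in\ell^2(V)\colon \sum_{u\in V}c_u\,|f(u)|^2<\infty\Big\},
\]
which drives the remaining parts (with the convention that a term with $c_u=\infty$ forces $f(u)=0$).

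For (ii), if $c_u<\infty$ for every $u$ then each $e_u\in\dz{\slam}$ by (i), hence $\escr\subseteq\dz{\slam}$, and $\slam$ is densely defined because $\escr$ is dense. Conversely, if $c_{u_0}=\infty$ for some $u_0$, the displayed description forces $f(u_0)=0$ for every $f\in\dz{\slam}$, so $\dz{\slam}\subseteq\{e_{u_0}\}^\perp$, a proper closed subspace, and $\slam$ is not densely defined; this yields the equivalence and shows density is the same as finiteness of every $c_u$. For (iii), note from \eqref{lamtauf} that for $f\in\dz{\slam}$ one has $\slam f=0$ iff $\lambda_v f(\pa v)=0$ for all $v\in V^\circ$. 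If $c_u>0$ for every $u$, then for each $u$ some $v\in\dzi u$ has $\lambda_v\neq0$, forcing $f(u)=0$, so $\slam$ is injective. If instead $c_{u_0}=0$ for some $u_0$, then $e_{u_0}\in\dz{\slam}$ and $\slam e_{u_0}=0$ by \eqref{eu}, so $\slam$ is not injective. Finally, $c_u>0$ for all $u$ says exactly that every $\dzi u$ is nonempty (leaflessness) and carries a nonzero weight, which is the stated condition.

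For (iv), assume $\overline{\dz{\slam}}=\ell^2(V)$ and $\lambda_v\neq0$ for all $v\in V^\circ$. By (ii) every $c_u$ is finite, and since each summand $|\lambda_v|^2$ is strictly positive, finiteness of $\sum_{v\in\dzi u}|\lambda_v|^2$ forces each $\dzi u$ to be at most countable (an unconditional sum of positive reals over an uncountable index set diverges). It remains to assemble $V$ from these sets. If $\tcal$ has a root, then by \eqref{korzen} and \eqref{num3} we have $V=\des{\koo}=\bigcup_{n=0}^\infty\dzin n{\koo}$, and an induction based on \eqref{num1} shows each $\dzin n{\koo}$ is at most countable (a countable union of countable sets); hence so is $V$. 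If $\tcal$ has no root, fix $\omega\in V$; since every vertex then has a parent, each $u\in V$ shares an ancestor with $\omega$, giving $V=\bigcup_{n=0}^\infty\des{\paa^n(\omega)}$, and the same induction shows every $\des{\paa^n(\omega)}$ is at most countable. In either case $V$ is a countable union of at most countable sets.

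I expect the only genuinely delicate point to be the passage from ``$\dzi u$ at most countable for every $u$'' to ``$V$ at most countable'' in the rootless case, where I must justify $V=\bigcup_{n}\des{\paa^n(\omega)}$; this rests on the fact that any two vertices of a directed tree possess a common ancestor, which follows from connectedness of $\tcal$ together with the uniqueness of parents.
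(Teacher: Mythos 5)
Your proof is correct, but it is worth noting that the paper does not actually prove Proposition \ref{bas} at all: it obtains it by combining Propositions 3.1.3, 3.1.7 and 3.1.10 of the memoir \cite{j-j-s}, so your argument is necessarily a different (self-contained) route. The core of your proof is the weighted-$\ell^2$ description
$\dz{\slam}=\{f\in\ell^2(V)\colon \sum_{u\in V}c_u|f(u)|^2<\infty\}$ with $c_u=\sum_{v\in\dzi u}|\lambda_v|^2$, obtained by regrouping $\|\varLambda_\tcal f\|^2$ along the partition \eqref{roz}; this is precisely the content of \cite[Proposition 3.1.3]{j-j-s}, and in fact it is the same computation the paper itself performs later when establishing condition ($\dag$) in the proof of Theorem \ref{sqr20}, so your argument is fully in the spirit of the paper's machinery. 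Parts (i)--(iii) then follow cleanly as you show, and in (iv) your reduction of countability of $V$ to countability of each $\dzi u$ via \eqref{korzen}, \eqref{num1} (rooted case) and via $V=\bigcup_{n=0}^\infty\des{\paa^n(\omega)}$ (rootless case) is sound; note that this last covering fact, which you correctly flag as the one genuinely delicate point and justify by connectedness plus uniqueness of parents, is exactly \cite[Proposition 2.1.6]{j-j-s}, the same result the paper invokes for the rootless construction in Theorem \ref{sqr20}. What your approach buys is independence from the memoir: a reader gets a complete verification from the definition \eqref{lamtauf} alone; what the paper's citation buys is brevity and consistency with the broader framework of \cite{j-j-s}, where these statements sit among many related structural results.
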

   \section{Directed trees admitting $\slam$'s with
$\dz{\slam^2}=\{0\}$} The proof of Theorem \ref{sqr20}
below contains a method of constructing densely
defined weighted shifts $\slam$ on directed trees with
nonzero weights such that $\dz{\slam^2}=\{0\}$. By
imposing carefully tailored restrictions on weights,
we will show in Theorem \ref{hypdz20} below how to use
this method to construct hyponormal weighted shifts on
directed trees with the aforesaid properties.
   \begin{thm}\label{sqr20}
Let $\tcal$ be a directed tree. Then the following
assertions are equivalent{\em :}
   \begin{enumerate}
   \item[(i)] there exists a family
$\lambdab=\{\lambda_v\}_{v\in V^\circ}$ of nonzero
complex numbers such that
$\overline{\dz{\slam}}=\ell^2(V)$ and
$\dz{\slam^2}=\{0\}$,
   \item[(ii)] $\card{\dzi{u}}=\aleph_0$ for every
$u \in V$.
   \end{enumerate}
Moreover, if $\slam$ is as in {\em (i)}, then $\slam$
is injective.
   \end{thm}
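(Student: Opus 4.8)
The plan is to reduce the whole statement to a single computation of $\varLambda_\tcal^2$ and then read off both implications from it. Since $\slam f=\varLambda_\tcal f$ on $\dz{\slam}$, one has $\dz{\slam^2}=\{f\in\ell^2(V)\colon \varLambda_\tcal f\in\ell^2(V)\text{ and }\varLambda_\tcal^2 f\in\ell^2(V)\}$. Writing $a_u:=\sum_{v\in\dzi u}|\lambda_v|^2=\|\slam e_u\|^2$ and, for every $u\in V$,
\[
\Theta_u:=\sum_{v\in\dzi u}|\lambda_v|^2\,a_v=\sum_{w\in\dzin{2}{u}}|\lambda_w|^2\,|\lambda_{\pa{w}}|^2,
\]
I would first establish the master identity $\|\varLambda_\tcal^2 f\|^2=\sum_{u\in V}|f(u)|^2\,\Theta_u$, valid in $[0,\infty]$ for all $f\colon V\to\cbb$. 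This rests only on the tree structure: each $w$ has at most one grandparent $\pa^2 w$, the fibres $\{w\colon\pa^2 w=u\}$ are exactly the sets $\dzin{2}{u}$, and these are pairwise disjoint by \eqref{num1}, so no term is double counted. When $\slam$ is densely defined (hence $a_v<\infty$ for all $v$ by Proposition~\ref{bas}(ii)), this yields the clean equivalence $\dz{\slam^2}=\{0\}\iff\Theta_u=\infty$ for every $u\in V$: if all $\Theta_u=\infty$, then any $f$ with $f(u)\neq0$ already has $\|\varLambda_\tcal^2 f\|=\infty$ (so the condition $\varLambda_\tcal f\in\ell^2$ is not even needed), while if $\Theta_{u_0}<\infty$ for some $u_0$, then $e_{u_0}$ is a nonzero member of $\dz{\slam^2}$.

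Granting this equivalence, the implication (i)$\Rightarrow$(ii) is immediate. Proposition~\ref{bas}(iv) forces $V$, hence every $\dzi u$, to be at most countable. If some $\dzi{u_0}$ were finite (the empty case included), then $\Theta_{u_0}$ would be a finite sum of the finite quantities $|\lambda_v|^2 a_v$, giving $\Theta_{u_0}<\infty$ and contradicting $\dz{\slam^2}=\{0\}$. Thus every $\dzi u$ is countably infinite, i.e.\ $\card{\dzi u}=\aleph_0$. In particular the tree is leafless, so with $a_u>0$ (the weights are nonzero) Proposition~\ref{bas}(iii) delivers the final injectivity assertion.

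For (ii)$\Rightarrow$(i) I would produce explicit weights. For each $u$ fix a bijection $\dzi u\to\{1,2,3,\dots\}$ and let $N(v)$ denote the index of $v$ among the children of $\pa{v}$; set $c_{\koo}:=1$ (if $\tcal$ has a root) and $c_v:=N(v)^2$ for $v\in V^\circ$, and define the positive weights $\lambda_v:=\sqrt{c_{\pa{v}}/c_v}$. Then $a_u=\sum_{v\in\dzi u}c_u/c_v=c_u\sum_{n=1}^{\infty}n^{-2}<\infty$, so $\slam$ is densely defined by Proposition~\ref{bas}(ii), whereas $\Theta_u=\sum_{v\in\dzi u}(c_u/c_v)\,a_v=\big(c_u\sum_{n=1}^{\infty}n^{-2}\big)\,\card{\dzi u}=\infty$ because $\card{\dzi u}=\aleph_0$. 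By the equivalence above, $\dz{\slam^2}=\{0\}$, completing the construction.

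The only genuine work lies in two places. First, one must set up the master identity so that the regrouping by grandparents is exact; this is where the disjointness relations of the tree are essential and where a careless sum would overcount. Second, and this is the real obstacle, one must force $\Theta_u=\infty$ at \emph{every} vertex simultaneously while keeping all $a_u$ finite: the tension is that $|\lambda_v|^2$ must be summable over siblings, yet the products $|\lambda_v|^2a_v$ must not be. The resolution is to let $a_v$ grow like $N(v)^2$ while $|\lambda_v|^2$ decays like $N(v)^{-2}$, so that $|\lambda_v|^2a_v$ stays bounded below by a positive constant along each infinite sibling set; the self-referential choice $\lambda_v=\sqrt{c_{\pa{v}}/c_v}$ with $c_v=N(v)^2$ realises exactly this balance consistently across all levels of the tree.
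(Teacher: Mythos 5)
Your proposal is correct, and while its skeleton matches the paper's — a summability characterization of $\dz{\slam^2}$ (your master identity is the paper's condition ($\dag$) in disguise), the same (i)$\Rightarrow$(ii) argument via Proposition \ref{bas}(iv) and the vector $e_{u_0}$, and the same injectivity argument via Proposition \ref{bas}(iii) — your construction for (ii)$\Rightarrow$(i) takes a genuinely different route. The paper builds the weights recursively: it imposes the normalization $\big(\sum_{w\in\dzi{v}}\lambda_w^2\big)\lambda_v^2=1$ (condition \eqref{kwkw}), constructs such families generation by generation by induction over $\dzin{n}{u}$, and in the rootless case must additionally paste these families together along the backward path $u_1,\paa(u_1),\paa^2(u_1),\dots$, using $V=\bigcup_{k=0}^\infty\des{\paa^k(u_1)}$ and the special choices $\lambda_{u_n}=\frac{1}{\sqrt{2}}$. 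Your closed-form weights $\lambda_v=\sqrt{c_{\pa{v}}/c_v}$ with $c_v=N(v)^2$ avoid all of this: the telescoping structure makes $\|\slam e_v\|^2\lambda_v^2=c_{\pa{v}}\pi^2/6$ constant along each sibling set (the paper's normalization makes this product identically $1$, but any positive constant suffices for divergence), and rooted and rootless trees are handled uniformly, with no induction and no case distinction — a real simplification. What the paper's more laborious recursive scheme buys is flexibility: since at each step one may choose any positive summable family with prescribed sum, the same construction is later refined to impose extra conditions on the weights, namely $\sum_{w\in\dzi{v}}\lambda_w^4\Le 1$ to obtain hyponormality in Theorem \ref{hypdz20}, and condition \eqref{numJ} to obtain $\dz{\slam}\nsubseteq\dz{\slam^*}$ in Theorem \ref{dziedz1}. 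Your explicit weights settle Theorem \ref{sqr20} directly but lack this flexibility; indeed, for your choice $\sum_{v\in\dzi{u}}\lambda_v^2/\|\slam e_v\|^2=(\pi^2/15)\,c_u$, which exceeds $1$ whenever $N(u)\Ge 2$, so by Theorem \ref{hyp} your $\slam$ is never hyponormal and could not serve as the basis for Theorem \ref{hypdz20}.
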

   \begin{proof} Fix $\lambdab=\{\lambda_v\}_{v\in V^\circ}
\subseteq \cbb$. We show that
   \begin{enumerate}
   \item[($\dag$)]  a complex function $f$ on $V$
belongs to $\dz{\slam^2}$ if and only
if\footnote{\;with the convention that $0 \cdot \infty
= 0$}
   \begin{align} \label{dof2}
\sum_{u \in V} \Big(1+\zeta_u^2 + \sum_{v \in \dzi{u}}
\zeta_v^2 |\lambda_v|^2\Big) |f(u)|^2 < \infty,
   \end{align}
   \end{enumerate}
where $\zeta_u:=\sqrt{\sum_{v \in \dzi{u}} |\lambda_v|^2}$
for $u \in V$.

Indeed, by \cite[Proposition 3.1.3]{j-j-s}, a complex
function $f$ on $V$ belongs to $\dz{\slam}$ if and
only if $f \in \ell^2(V)$ and $\sum_{u \in V}
\zeta_u^2 |f(u)|^2 < \infty$. Hence a complex function
$f$ on $V$ belongs to $\dz{\slam^2}$ if and only if
$\sum_{u \in V} (1+\zeta_u^2) |f(u)|^2 < \infty$ and
$\sum_{u\in V} \zeta_u^2 \, |(\slam f)(u)|^2 <
\infty$. Since the following equalities hold for $f
\in \dz{\slam}$,
   \begin{align*}
\sum_{u\in V} \zeta_u^2 \, |(\slam f)(u)|^2 &
\overset{\eqref{lamtauf}}= \sum_{u\in V^\circ}
\zeta_u^2 \, |\lambda_u|^2 |f(\paa(u))|^2
   \\
& \overset{\eqref{roz}}= \sum_{u\in V} \sum_{v\in
\dzi{u}} \zeta_v^2 |\lambda_v|^2 |f(\paa(v)|^2
   \\
&\hspace{1.8ex} = \sum_{u\in V} \Big(\sum_{v\in
\dzi{u}} \zeta_v^2 |\lambda_v|^2\Big) |f(u)|^2,
   \end{align*}
we see that a complex function $f$ on $V$ belongs to
$\dz{\slam^2}$ if and only if \eqref{dof2} holds.

   (i)$\Rightarrow$(ii) Let $\slam$ be as in (i). By
Proposition \ref{bas}(iv), $V$ is countable. Thus each
$\dzi{u}$ is countable. Suppose that, contrary to our
claim, (ii) does not hold. Then there exists $u_0 \in
V$ such that $\dzi{u_0}$ is finite. Since $\slam$ is
densely defined, we infer from assertions (i) and (ii)
of Proposition \ref{bas} that $\zeta_v < \infty$ for
all $v \in V$. Hence
   \begin{align*}
1+\zeta_{u_0}^2 + \sum_{v \in \dzi{u_0}} \zeta_v^2
|\lambda_v|^2 < \infty.
   \end{align*}
This, combined with ($\dag$), implies that $f=e_{u_0} \in
\dz{\slam^2}$, which contradicts (i).

   (ii)$\Rightarrow$(i) First, we show that
   \begin{enumerate}
   \item[($\ddag$)] for each $(\vartheta,u) \in (0,\infty)
\times V$ there exists $\{\lambda_v\}_{v \in \des{u}}
\subseteq (0,\infty)$ such that
   \begin{align}       \label{kwkw1}
\lambda_u^2 & =\vartheta,
      \\
\Big(\sum_{w\in \dzi{v}} \lambda_w^2\Big) \lambda_v^2
& = 1, \quad v\in \des{u}. \label{kwkw}
   \end{align}
   \end{enumerate}

To do so, we fix $u \in V$ and set $X_n = \dzin{0}{u}
\sqcup \cdots \sqcup \dzin{n}{u}$ for $n\Ge 1$, and
$X_0= \dzin{0}{u}$. Since, by \eqref{num3}, $\des{u} =
\bigcup_{n=1}^\infty X_n$, we can construct the
required family inductively. For $n=1$, we put
$\lambda_u=\sqrt{\vartheta}$ and choose a family
$\{\lambda_v\}_{v \in \dzi{u}}\subseteq (0,\infty)$
such that $\big(\sum_{v\in \dzi{u}} \lambda_v^2\big)
\vartheta = 1$ (this is possible because $\dzi{u}$ is
nonempty and countable). Fix $n\Ge 1$, and assume that
we already have a family $\{\lambda_v\}_{v \in
X_n}\subseteq (0,\infty)$ such that
$\lambda_u^2=\vartheta$ and $\big(\sum_{w\in \dzi{v}}
\lambda_w^2\big) \lambda_v^2=1$ for all $v \in
X_{n-1}$. Then for every $v \in \dzin{n}{u}$ we can
choose a family $\{\lambda_w\}_{w \in \dzi{v}}
\subseteq (0,\infty)$ such that $\big(\sum_{w\in
\dzi{v}} \lambda_w^2\big)\lambda_v^2=1$. In view of
\eqref{num1}, this gives us the family
$\{\lambda_v\}_{v \in \dzin{n+1}{u}}$ such that
$\big(\sum_{w\in \dzi{v}} \lambda_w^2\big)
\lambda_v^2=1$ for all $v \in X_{n}$. Now by induction
we are done.

If $\tcal$ has a root, then combining ($\dag$) and
($\ddag$) with \eqref{korzen} and Proposition
\ref{bas}(i) does the job (the number $\lambda_{\koo}$
can be chosen arbitrarily).

Suppose now that $\tcal$ is rootless. Take $u_1 \in V$
and set $u_2 = \paa(u_1)$. By $(\ddag)$, there exists
a family $\{\lambda_v\}_{v \in \des{u_1}} \subseteq
(0,\infty)$ with $\lambda_{u_1}=\frac 1{\sqrt2}$,
which satisfies \eqref{kwkw} with $u_1$ in place of
$u$. In the next step we construct a new family
$\{\lambda_v\}_{v \in \des{u_2}\setminus \des{u_1}}
\subseteq (0,\infty)$ with
$\lambda_{u_2}=\frac{1}{\sqrt{2}}$ such that the
extended family $\{\lambda_v\}_{v \in \des{u_2}}$
satisfies \eqref{kwkw} with $u_2$ in place of $u$. For
this, note that
   \begin{align}   \label{des-des}
\des{u_2} \setminus \des{u_1} \overset{\eqref{num3+}}=
\{u_2\} \sqcup \bigsqcup_{u \in \dzi{u_2} \setminus
\{u_1\}} \des{u}.
   \end{align}
Set $\lambda_{u_2}= \frac 1{\sqrt{2}}$ and choose a
family $\{\vartheta_u\}_{u \in \dzi{u_2} \setminus
\{u_1\}} \subseteq (0,\infty)$ such that
   \begin{align}    \label{des21}
\Big(\sum_{u \in \dzi{u_2} \setminus \{u_1\}}
\vartheta_u + \lambda_{u_1}^2\Big) \lambda_{u_2}^2= 1.
   \end{align}
Applying ($\ddag$) to $u \in \dzi{u_2} \setminus
\{u_1\}$ and $\vartheta=\vartheta_u$, we get the
family $\{\lambda_v\}_{v \in \des{u}} \subseteq
(0,\infty)$ satisfying \eqref{kwkw1} and \eqref{kwkw}
with $\vartheta=\vartheta_u$. This, together with
\eqref{des21}, leads to $\big(\sum_{u\in \dzi{u_2}}
\lambda_u^2\big) \lambda_{u_2}^2 = 1$. In view of
\eqref{des-des}, our construction is complete.
Applying an induction argument (with
$\lambda_{u_n}=\frac{1}{\sqrt{2}}$ for $n\Ge 2$) and
using the fact that $V=\bigcup_{k=0}^\infty
\des{\paa^k(u_1)}$ (cf.\ \cite[Proposition
2.1.6]{j-j-s}), we construct a family
$\lambdab=\{\lambda_v\}_{v\in V} \subseteq (0,\infty)$
such that $\zeta_v^2 \, \lambda_v^2 = 1$ for all $v
\in V$. This, combined with ($\dag$) and Proposition
\ref{bas}(i), gives (i).

The ``moreover'' part follows from (ii) and
Proposition \ref{bas}(iii).
   \end{proof}
Our method enables us to construct $\slam$ with the
additional property that $\dz{\slam} \nsubseteq
\dz{\slam^*}$, which is opposite to what happens in
Theorem \ref{hypdz20} below.
   \begin{thm} \label{dziedz1}
If $\tcal$ is a directed tree such that
$\card{\dzi{u}}=\aleph_0$ for every $u \in V$, then
there exists a family $\lambdab=\{\lambda_v\}_{v\in
V^\circ}$ of nonzero complex numbers such that $\slam$
is injective and densely defined, $\dz{\slam}
\nsubseteq \dz{\slam^*}$ and $\dz{\slam^2}=\{0\}$.
   \end{thm}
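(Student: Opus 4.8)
The plan is to reuse the analytic skeleton of Theorem \ref{sqr20} and graft onto it a single function witnessing the failure of the inclusion $\dz\slam\subseteq\dz{\slam^*}$. Recall from $(\dag)$ that $\dz{\slam^2}=\{0\}$ is equivalent to the coefficient $1+\zeta_u^2+\sum_{v\in\dzi u}\zeta_v^2|\lambda_v|^2$ in \eqref{dof2} being infinite for every $u\in V$; since dense definiteness forces $\zeta_u<\infty$ (Proposition \ref{bas}(i),(ii)), this just means $\sum_{v\in\dzi u}\zeta_v^2|\lambda_v|^2=\infty$ for all $u$. On the other hand, $f\in\dz\slam$ iff $f\in\ell^2(V)$ and $\sum_u\zeta_u^2|f(u)|^2<\infty$ (cf.\ \cite[Proposition 3.1.3]{j-j-s}), while the adjoint acts by $(\slam^* f)(u)=\sum_{v\in\dzi u}\overline{\lambda_v}f(v)$ and, for $f\in\ell^2(V)$ with all $\zeta_u<\infty$ (so that the inner sums converge absolutely by Cauchy--Schwarz), $f\in\dz{\slam^*}$ iff $\sum_u|\sum_{v\in\dzi u}\overline{\lambda_v}f(v)|^2<\infty$ (cf.\ \cite{j-j-s}). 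Thus I want positive weights realizing $\sum_{v\in\dzi u}\zeta_v^2\lambda_v^2=\infty$ at every vertex, together with one $f\in\dz\slam$ for which the last series diverges.

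The naive attempt---take $f$ proportional to the weights on the children of one vertex so as to maximize a coordinate of $\slam^* f$---fails twice: at a single vertex $u$ the sum $\sum_{v\in\dzi u}\lambda_v f(v)$ cannot diverge for $f\in\ell^2(V)$ because $\sum_{v\in\dzi u}|\lambda_v|^2=\zeta_u^2<\infty$, and choosing $f\propto\lambda$ on $\dzi u$ forces the defining series of $\dz\slam$ to inherit the divergent quantity $\sum_{v\in\dzi u}\zeta_v^2\lambda_v^2$. Hence the divergence of $\|\slam^* f\|^2$ must be spread over infinitely many vertices. I therefore fix a vertex $u_0$, denote its children by $u_1,u_2,\dots$, and arrange the failure to appear at the coordinates $(\slam^* f)(u_j)$ while $f$ is supported on the grandchildren $\dzi{u_j}$. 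The point is to make $\zeta_{u_j}\to\infty$, so that the bound $|(\slam^* f)(u_j)|^2\Le\zeta_{u_j}^2\sum_{v\in\dzi{u_j}}|f(v)|^2$ leaves room for the $\ell^2$-mass of $f$ on $\dzi{u_j}$ to be summable in $j$ while the adjoint contributions are not.

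Concretely, at each $u_j$ I split $\dzi{u_j}$ into a ``heavy'' part (necessarily infinite) and a ``light'' part. On the heavy part I put small incoming weights but large values of $\zeta$---legitimate, since the $\zeta$ of a vertex is governed by its own children---so as to supply the divergence $\sum_{v\in\dzi{u_j}}\zeta_v^2\lambda_v^2=\infty$, and I set $f=0$ there. On the light part I keep $\zeta$ bounded, place most of the weight budget $\sum_{v\in\dzi{u_j}}\lambda_v^2=\zeta_{u_j}^2$, and take $f$ proportional to the weights; this makes $(\slam^* f)(u_j)$ of order $\zeta_{u_j}^2 t_j$, while the contributions of $\dzi{u_j}$ to $\|f\|^2$ and to $\sum_v\zeta_v^2|f(v)|^2$ are both of order $t_j^2\zeta_{u_j}^2$. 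Tuning $\zeta_{u_j}^2$ and the scalars $t_j$ so that these $\ell^2$- and $\dz\slam$-contributions are summable in $j$ while the adjoint contributions $\zeta_{u_j}^2\cdot t_j^2\zeta_{u_j}^2$ form a divergent (harmonic-type) series yields the desired $f\in\dz\slam\setminus\dz{\slam^*}$.

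Finally I must realize these prescriptions on the whole tree while keeping $\zeta_u<\infty$ and $\sum_{v\in\dzi u}\zeta_v^2\lambda_v^2=\infty$ everywhere. Here the building block $(\ddag)$ from the proof of Theorem \ref{sqr20} enters: below every vertex whose outgoing $\zeta$ and incoming weight I have fixed, I distribute the children's weights as needed and invoke $(\ddag)$ on each child's descendant subtree with $\vartheta$ equal to that child's weight, producing $\zeta_v^2\lambda_v^2=1$ throughout the subtree and hence, by \eqref{num1} and the infinitude of each $\dzi v$, the relation $\sum_{w\in\dzi v}\zeta_w^2\lambda_w^2=\infty$ automatically at every such vertex. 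In the rooted case I take $u_0=\koo$ and use \eqref{korzen}; in the rootless case I extend upward along $u_0,\paa(u_0),\paa^2(u_0),\dots$, filling the sibling subtrees by $(\ddag)$ exactly as in the rootless part of Theorem \ref{sqr20}, so that $V=\bigcup_k\des{\paa^k(u_0)}$ is exhausted. Then injectivity and dense definiteness follow from Proposition \ref{bas}(ii),(iii), the identity $\dz{\slam^2}=\{0\}$ from $(\dag)$, and $\dz\slam\nsubseteq\dz{\slam^*}$ from the witness $f$. The main obstacle is precisely the balancing in the previous paragraph: the trivial-square constraint forbids the obvious maximizing choice of $f$, so one must check that the heavy/light splitting can be rate-tuned to keep the $\ell^2$ and $\dz\slam$ budgets convergent while the adjoint series diverges.
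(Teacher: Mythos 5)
Your proposal is correct, but it follows a genuinely different route from the paper's proof. The paper never constructs a witness vector: it strengthens ($\ddag$) by the additional requirement \eqref{numJ}, obtained from the elementary fact \eqref{alph} (one may prescribe the first term of a positive sequence with given $\ell^2$-sum up to an error of $1$, hence make the second-generation weights unbounded), while keeping the normalization $\zeta_v^2\lambda_v^2=1$ of ($\ddag$) everywhere; since that normalization gives $\|\slam e_w\|^2=1/\lambda_w^2$, condition \eqref{numJ} says precisely that $\sup_{v\in V}\sum_{w\in\dzi{v}}|\lambda_w|^2/(1+\|\slam e_w\|^2)=\infty$, and the ready-made criterion of \cite[Theorem 4.1.1]{j-j-s}, which characterizes the inclusion $\dz{\slam}\subseteq\dz{\slam^*}$ by finiteness of this supremum, finishes the proof. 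You instead re-derive by hand the relevant direction of that criterion in the case at issue: using the coordinate formula for $\slam^*$, you exhibit an explicit $f\in\dz{\slam}\setminus\dz{\slam^*}$, spreading the divergence of $\|\slam^*f\|^2$ over infinitely many coordinates $(\slam^*f)(u_j)$ via a heavy/light splitting of each $\dzi{u_j}$; your diagnosis of why the naive single-vertex choice of $f$ must fail is exactly the right obstruction, and the rate-tuning ($\sum_j t_j^2\zeta_{u_j}^2<\infty$ while $\sum_j t_j^2\zeta_{u_j}^4=\infty$, e.g.\ $\zeta_{u_j}^2=j$, $t_j^2=j^{-3}$) is consistent with all the constraints. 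What each approach buys: the paper's argument is shorter and stays entirely within the weight normalization of ($\ddag$), delegating the operator theory to the cited criterion; yours is self-contained, needing only the action formula for the adjoint from \cite{j-j-s}. One detail you should make explicit: your ($\ddag$)-filling yields $\sum_{v\in\dzi{u}}\zeta_v^2\lambda_v^2=\infty$ ``automatically'' only at vertices all of whose (infinitely many) children carry the product-one normalization $\zeta_v^2\lambda_v^2=1$; at $u_0$ itself the children are the prescription vertices $u_j$ with $\zeta_{u_j}^2\to\infty$, so the incoming weights must additionally satisfy $\sum_j\lambda_{u_j}^2<\infty$ but $\sum_j\zeta_{u_j}^2\lambda_{u_j}^2=\infty$ (e.g.\ $\lambda_{u_j}^2=j^{-2}$ against $\zeta_{u_j}^2=j$), which is easily compatible with everything else, so this is an omission rather than a flaw; in the rootless case the corresponding check at the ancestors $\paa^k(u_0)$ is automatic, since the sibling subtrees are filled with product-one weights.
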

   \begin{proof}
To achieve this, we proceed as in the proof of
implication (ii)$\Rightarrow$(i) of Theorem
\ref{sqr20} with one exception, namely, we strengthen
($\ddag$) by requiring, in addition to \eqref{kwkw1}
and \eqref{kwkw}, that
   \begin{align}  \label{numJ}
\sup_{v\in \dzi{u}}\sum_{w \in \dzi v} \frac
{\lambda_w^4} {1+ \lambda_w^2}= \infty.
   \end{align}
This in turn can be deduced from the following fact:
   \begin{align} \label{alph}
   \begin{minipage}{70ex}
for every real number $\alpha > 0$, there exists a
sequence $\{\lambda_n\}_{n=1}^\infty \subseteq
(0,\infty)$ such that $|\lambda_{1} - \alpha| < 1$ and
$\sum_{n=1}^\infty \lambda_n^2 = \alpha^2$.
   \end{minipage}
   \end{align}
Indeed, arguing as in the proof of ($\ddag$), we find
a family $\{\lambda_v\}_{v\in \dzi{u}} \subseteq
(0,\infty)$ such that $\big(\sum_{v\in \dzi{u}}
\lambda_v^2\big) \vartheta = 1$. Then evidently
$\sup_{v \in \dzi{u}} 1/\lambda_v^2=\infty$. In the
next step, using \eqref{alph}, we construct a family
$\{\lambda_w\}_{w\in \dzin{2}{u}}$ such that
$\big(\sum_{w\in \dzi{v}} \lambda_w^2\big) =
1/\lambda_v^2$ for every $v\in \dzi{u}$ and $\sup_{w
\in \dzin{2}{u}} \lambda_w^2 = \infty$. This, combined
with \eqref{num1}, implies \eqref{numJ}. The rest of
the proof goes through as for ($\ddag$), with hardly
any changes. It follows from \eqref{eu} and
\eqref{kwkw} that $\|\slam e_w\|^2= 1/\lambda_w^2$ for
all $w \in \des{u}$, which together with \eqref{numJ}
implies that $\sup_{v\in V}\sum_{w \in \dzi v} \frac
{|\lambda_w|^2} {1+ \|\slam e_w\|^2}= \infty$. By
applying \cite[Theorem 4.1.1]{j-j-s}, we deduce that
$\dz{\slam} \nsubseteq \dz{\slam^*}$. Obviously, such
$\slam$ is never hyponormal.
   \end{proof}
   \section{Hyponormal weighted shifts $\slam$ with
$\dz{\slam^2}=\{0\}$} Let us recall a characterization
of hyponormality of weighted shifts on directed trees
with nonzero weights (in view of \cite[Proposition
5.3.1]{b-j-j-s}, there is no loss of generality in
assuming that underlying directed trees are leafless).
    \begin{thm}[\mbox{\cite[Theorem
5.1.2 and Remark 5.1.5]{j-j-s}}] \label{hyp} Let
$\slam$ be a densely defined weighted shift on a
leafless directed tree $\tcal$ with nonzero weights
$\lambdab = \{\lambda_v\}_{v \in V^\circ}$. Then
$\slam$ is hyponormal if and only if
    \begin{gather*}
\sum_{v \in \dzi{u}} \frac {|\lambda_v|^2}{\|\slam
e_v\|^2} \Le 1, \quad u \in V.
    \end{gather*}
    \end{thm}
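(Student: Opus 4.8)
The plan is to turn hyponormality into two explicit $\ell^2$-inequalities by first recording how $\slam$ and $\slam^*$ act, and then to treat the two implications separately. Since $\slam$ is densely defined, Proposition~\ref{bas}(ii) gives $\escr\subseteq\dz\slam$, so each $e_v\in\dz\slam$ and, by leaflessness together with $\lambda_v\ne0$, one has $0<\|\slam e_v\|^2=\sum_{w\in\dzi v}|\lambda_w|^2<\infty$ for every $v\in V$. Two formulas will drive the argument. First, for $f\in\dz\slam$, equation~\eqref{lamtauf} and the regrouping $V^\circ=\bigsqcup_{u}\dzi u$ from~\eqref{roz} give $\|\slam f\|^2=\sum_{v\in V^\circ}|\lambda_v|^2|f(\pa v)|^2=\sum_{u\in V}\|\slam e_u\|^2|f(u)|^2$. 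Second, a direct computation on the orthonormal basis yields $\slam^* e_v=\overline{\lambda_v}\,e_{\pa v}$ for $v\in V^\circ$ and $\slam^* e_\koo=0$, which points to the candidate identity $(\slam^* f)(u)=\sum_{v\in\dzi u}\overline{\lambda_v}f(v)$.

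For the implication that the displayed condition forces hyponormality, fix $f\in\dz\slam$ and put $g(u):=\sum_{v\in\dzi u}\overline{\lambda_v}f(v)$. Writing $\overline{\lambda_v}f(v)=\frac{\overline{\lambda_v}}{\|\slam e_v\|}\cdot\|\slam e_v\|f(v)$ and applying the Cauchy--Schwarz inequality inside each fibre $\dzi u$,
\[
|g(u)|^2\Le\Big(\sum_{v\in\dzi u}\frac{|\lambda_v|^2}{\|\slam e_v\|^2}\Big)\Big(\sum_{v\in\dzi u}\|\slam e_v\|^2|f(v)|^2\Big)\Le\sum_{v\in\dzi u}\|\slam e_v\|^2|f(v)|^2,
\]
the last step being the hypothesis. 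Summing over $u$ and invoking~\eqref{roz} gives $\|g\|^2\Le\sum_{v\in V^\circ}\|\slam e_v\|^2|f(v)|^2\Le\|\slam f\|^2<\infty$, so $g\in\ell^2(V)$. To conclude $f\in\dz{\slam^*}$ and $\slam^* f=g$, I would check $\is{\slam h}{f}=\is{h}{g}$ for every $h\in\dz\slam$; expanding both sides over $V^\circ=\bigsqcup_u\dzi u$ and interchanging the two summations makes them coincide, the interchange being legitimate because $\sum_{v\in V^\circ}|\lambda_v\,h(\pa v)\,f(v)|\Le\|\slam h\|\,\|f\|<\infty$ by another Cauchy--Schwarz. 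This yields both $\dz\slam\subseteq\dz{\slam^*}$ and $\|\slam^* f\|\Le\|\slam f\|$.

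For the converse I would test $\|\slam^* f\|\Le\|\slam f\|$ on vectors supported in a single fibre. Fix $u\in V$ and a finite $F\subseteq\dzi u$, and set $f=\sum_{v\in F}\frac{\lambda_v}{\|\slam e_v\|^2}e_v\in\escr\subseteq\dz\slam$. Because $\pa v=u$ for all $v\in\dzi u$, the vector $\slam^* f=\sum_{v\in F}\frac{|\lambda_v|^2}{\|\slam e_v\|^2}e_{\pa v}$ is concentrated at $u$, so $\|\slam^* f\|^2=\big(\sum_{v\in F}\frac{|\lambda_v|^2}{\|\slam e_v\|^2}\big)^2$, while $\|\slam f\|^2=\sum_{v\in F}\frac{|\lambda_v|^2}{\|\slam e_v\|^2}$ by the first formula. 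Hyponormality therefore gives $\big(\sum_{v\in F}\frac{|\lambda_v|^2}{\|\slam e_v\|^2}\big)^2\Le\sum_{v\in F}\frac{|\lambda_v|^2}{\|\slam e_v\|^2}$, hence $\sum_{v\in F}\frac{|\lambda_v|^2}{\|\slam e_v\|^2}\Le1$; letting $F$ exhaust $\dzi u$ and then varying $u\in V$ yields the asserted inequality.

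I expect the one delicate point to be the adjoint identification in the forward direction, namely confirming that the tentative expression for $(\slam^* f)(u)$ genuinely computes $\slam^* f$ rather than merely producing an $\ell^2$-function that $\slam^* f$ dominates. The device that settles it is the absolute-convergence (Fubini) estimate above, which both licenses the interchange of the summations over the pairwise disjoint fibres $\dzi u$ and certifies membership in $\dz{\slam^*}$; apart from this, the proof rests only on Cauchy--Schwarz and the disjointness identity~\eqref{roz}.
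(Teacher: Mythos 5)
Your proof is correct, but there is an important mismatch of expectations here: the paper does not prove this statement at all. Theorem \ref{hyp} is imported verbatim from \cite[Theorem 5.1.2 and Remark 5.1.5]{j-j-s}, so there is no internal proof to compare your argument against; what you have produced is a self-contained reconstruction of the memoir's result. The argument itself is sound. The identity $\|\slam f\|^2=\sum_{u\in V}\|\slam e_u\|^2|f(u)|^2$ follows correctly from \eqref{lamtauf} and \eqref{roz}; in the forward direction the fibrewise Cauchy--Schwarz bound together with the hypothesis gives $\|g\|\Le\|\slam f\|$, and your absolute-convergence estimate $\sum_{v\in V^\circ}|\lambda_v h(\pa v)f(v)|\Le\|\slam h\|\,\|f\|$ is exactly what is needed both to regroup over $V^\circ=\bigsqcup_{u\in V}\dzi u$ and to certify $f\in\dz{\slam^*}$ with $\slam^* f=g$ (you rightly flag this as the delicate point; merely exhibiting an $\ell^2$ majorant would not suffice). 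In the converse, the test vectors $f=\sum_{v\in F}\frac{\lambda_v}{\|\slam e_v\|^2}e_v$ supported in a single fibre do give $\|\slam^* f\|^2=\bigl(\sum_{v\in F}\frac{|\lambda_v|^2}{\|\slam e_v\|^2}\bigr)^2$ and $\|\slam f\|^2=\sum_{v\in F}\frac{|\lambda_v|^2}{\|\slam e_v\|^2}$, so hyponormality forces every finite subsum to be at most $1$, and the full sum follows by monotone exhaustion. Leaflessness and the nonvanishing of the weights are invoked exactly where needed, namely to ensure $0<\|\slam e_v\|^2<\infty$ so that the quotients are defined.
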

Now we show that there are hyponormal weighted shifts
$\slam$ with $\dz{\slam^2}=\{0\}$.
   \begin{thm} \label{hypdz20}
If $\tcal$ is a directed tree such that
$\card{\dzi{u}}=\aleph_0$ for every $u \in V$, then
there exists a family $\lambdab=\{\lambda_v\}_{v\in
V^\circ}$ of nonzero complex numbers such that $\slam$
is injective and hyponormal, and $\dz{\slam^2}=\{0\}$.
   \end{thm}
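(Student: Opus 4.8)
The plan is to rerun the construction from the proof of the implication (ii)$\Rightarrow$(i) of Theorem \ref{sqr20}, which already delivers an injective, densely defined $\slam$ with $\dz{\slam^2}=\{0\}$, and to superimpose on it one extra inequality at every vertex so as to force hyponormality. Recall that that construction produces positive weights satisfying \eqref{kwkw}, i.e. $\big(\sum_{w\in\dzi v}\lambda_w^2\big)\lambda_v^2=1$ for every $v$; equivalently, with $\zeta_v$ as in \eqref{dof2}, one has $\zeta_v^2=1/\lambda_v^2$, and hence $\|\slam e_v\|^2=\zeta_v^2=1/\lambda_v^2$ by \eqref{eu}. The role of \eqref{kwkw} is that each coefficient $1+\zeta_u^2+\sum_{v\in\dzi u}\zeta_v^2|\lambda_v|^2$ occurring in ($\dag$) equals $\infty$, because $\sum_{v\in\dzi u}\zeta_v^2|\lambda_v|^2=\sum_{v\in\dzi u}1=\infty$ by infinitude of $\dzi u$; consequently \eqref{dof2} forces $f=0$ and thus $\dz{\slam^2}=\{0\}$. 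Since $\zeta_u<\infty$ for each $u$, the operator is densely defined and injective exactly as in Theorem \ref{sqr20}. This part I would keep verbatim.

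First I would translate hyponormality into a condition on the weights. Since $\|\slam e_v\|^2=1/\lambda_v^2$, the criterion of Theorem \ref{hyp} becomes $\sum_{v\in\dzi u}\lambda_v^4\Le 1$ for every $u\in V$. So it suffices to arrange, on top of \eqref{kwkw}, that $\sum_{v\in\dzi u}\lambda_v^4\Le 1$ for all $u$. The elementary fact I would isolate is: for all $s,\tau\in(0,\infty)$ there is a sequence $\{a_n\}_{n=1}^\infty\subseteq(0,\infty)$ with $\sum_n a_n=s$ and $\sum_n a_n^2\Le\tau$. This follows from the geometric choice $a_n=s(1-r)r^{n-1}$, for which $\sum_n a_n=s$ and $\sum_n a_n^2=s^2(1-r)/(1+r)\to 0$ as $r\to 1^-$; one picks $r$ close enough to $1$. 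Reading $a_n=\lambda_{v_n}^2$, this says that whenever a prescribed value of $\sum_{v\in\dzi u}\lambda_v^2$ must be realized by positive weights on the countably infinite set $\dzi u$, it can be realized with $\sum_{v\in\dzi u}\lambda_v^4$ below any preassigned positive bound.

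Accordingly, I would strengthen ($\ddag$) by requiring, in addition to \eqref{kwkw1} and \eqref{kwkw}, that $\sum_{w\in\dzi v}\lambda_w^4\Le 1$ for every $v\in\des u$: at the inductive step choosing $\{\lambda_w\}_{w\in\dzi v}$ one simply invokes the fact above with $s=1/\lambda_v^2$ and $\tau=1$. With this strengthened ($\ddag$), the rooted case is immediate, since applying it to $\koo$ and using \eqref{korzen} covers every hyponormality inequality while \eqref{kwkw} still gives $\dz{\slam^2}=\{0\}$.

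The main obstacle is the rootless case, where the spine vertices $u_2=\pa{u_1},u_3=\pa{u_2},\dots$ already carry the fixed weight $\lambda_{u_n}=\frac1{\sqrt2}$, so the inequality at $u_{n+1}$ must accommodate the contribution $\lambda_{u_n}^4=\frac14$ of the already-built child $u_n$ together with the new siblings. Concretely, when choosing $\{\vartheta_u\}_{u\in\dzi{u_{n+1}}\setminus\{u_n\}}$ subject to \eqref{des21}, I would additionally demand $\sum_{u}\vartheta_u^2\Le\frac34$; this is possible by the isolated fact since the remaining budget $1-\lambda_{u_n}^4=\frac34$ is strictly positive, and then $\sum_{u\in\dzi{u_{n+1}}}\lambda_u^4\Le\frac14+\frac34=1$. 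The choice $\lambda_{u_n}=\frac1{\sqrt2}$ is precisely what keeps this budget positive, which is the delicate point. Everything else proceeds as in Theorem \ref{sqr20}; injectivity follows from Proposition \ref{bas}(iii) (the tree is leafless as each $\dzi u$ is nonempty, and $\sum_{v\in\dzi u}\lambda_v^2=1/\lambda_u^2>0$), hyponormality from Theorem \ref{hyp}, and $\dz{\slam^2}=\{0\}$ from ($\dag$) together with \eqref{kwkw}, completing the proof.
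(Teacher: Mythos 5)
Your proposal is correct and follows essentially the same route as the paper: you strengthen ($\ddag$) by adding the fourth-power bound $\sum_{w\in\dzi{v}}\lambda_w^4\Le 1$ (which, via $\|\slam e_v\|^2=1/\lambda_v^2$, is exactly the hyponormality criterion of Theorem \ref{hyp}), prove it by an elementary sequence fact, and handle the rootless spine with the same budget computation $\lambda_{u_n}^4+\sum_u\vartheta_u^2\Le\tfrac14+\tfrac34=1$ made possible by the choice $\lambda_{u_n}=\tfrac1{\sqrt2}$. The only difference from the paper's proof (which introduces ($\ddag\ddag$) for this purpose) is cosmetic: you realize the auxiliary sequences by a geometric series, while the paper writes down an explicit flat-then-dyadic sequence.
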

   \begin{proof}
We modify the proof of implication
(ii)$\Rightarrow$(i) of Theorem \ref{sqr20}. First we
note that for each positive real number $r$, there
exists a sequence $\{r_n\}_{n=1}^\infty \subseteq
(0,1)$ such that $(\sum_{j=1}^\infty r_j)\, r=1$ and
$\sum_{j=1}^\infty r_j^2 \Le 1$ (e.g.,
$r_j=\frac{1}{rn}$ for $1 \Le j \Le n-1$, and
$r_j=\frac{1}{rn2^{j-n+1}}$ for $j\Ge n$, where $n \Ge
2$ is chosen so that $\frac{1}{r^2 n} \Le 1$). This
fact, when incorporated to the proof of ($\ddag$),
leads to
   \begin{enumerate}
   \item[($\ddag\ddag$)] for each $(\vartheta,u)
\in (0,\infty) \times V$ there exists
$\{\lambda_v\}_{v \in \des{u}} \subseteq (0,1)$ such
that $\lambda_u^2 =\vartheta$, $\big(\sum_{w\in
\dzi{v}} \lambda_w^2\big) \lambda_v^2 = 1$ and
$\sum_{w\in \dzi{v}} \lambda_w^4 \Le 1$ for all $v\in
\des{u}$.
   \end{enumerate}
If $\tcal$ has a root, then applying
\mbox{($\ddag\ddag$)} to $u=\koo$ and $\vartheta=1$ we
get a family $\lambdab=\{\lambda_v\}_{v\in V^\circ}
\subseteq (0,1)$ such that
   \begin{align}     \label{V}
\text{$\big(\sum_{w\in \dzi{v}} \lambda_w^2\big)
\lambda_v^2 = 1$ and $\sum_{w\in \dzi{v}} \lambda_w^4
\Le 1$ for all $v\in V$.}
   \end{align}
Suppose now that $\tcal$ is rootless. It is easily
seen that for every $r\in(0,1)$, there exists a
sequence $\{r_j\}_{j=1}^\infty \subseteq (0,1)$ such
that $r + \sum_{j=1}^\infty r_j = 2$ and $r^2 +
\sum_{j=1}^\infty r_j^2 \Le 1$. This fact combined
with the proof of Theorem \ref{sqr20} (use
\mbox{($\ddag\ddag$)} in place of ($\ddag$)) enables
us to construct a family $\lambdab=\{\lambda_v\}_{v\in
V} \subseteq (0,1)$ that satisfies \eqref{V}.

Since $\card{\dzi{u}}=\aleph_0$ for all $u \in V$, we
infer from assertions (i) and (iii) of Proposition
\ref{bas}, \eqref{V} and ($\dag$) that $\slam$ is
injective and densely defined, and
$\dz{\slam^2}=\{0\}$. It follows from \eqref{eu} and
the equality in \eqref{V} that $\lambda_v^2 = \|\slam
e_v\|^{-2}$ for all $v\in V^\circ$, and thus
   \begin{align*}
\sum_{v \in \dzi{u}} \frac {\lambda_v^2}{\|\slam
e_v\|^2} = \sum_{v \in \dzi{u}} \lambda_v^4
\overset{\eqref{V}}\Le 1, \quad u \in V,
   \end{align*}
which in view of Theorem \ref{hyp} completes the
proof.
   \end{proof}
   \begin{rem} \label{dziedz2}
In view of Theorems \ref{dziedz1} and \ref{hypdz20},
the weighted shift $\slam$ constructed in the proof of
implication (ii)$\Rightarrow$(i) of Theorem
\ref{sqr20} may satisfy either of the following two
conditions:\ $\dz{\slam} \nsubseteq \dz{\slam^*}$ or
$\dz{\slam} \subseteq \dz{\slam^*}$. It turns out that
this general construction always guarantees that
$\dz{\slam^*} \nsubseteq \dz{\slam}$. Indeed, since
for a fixed $u\in V$, $\|\slam e_v\|^2= 1/\lambda_v^2$
for all $v \in \des{u}$ (cf.\ \eqref{kwkw}) and
$\sum_{v \in \dzi{u}} \lambda_v^2 < \infty$, we deduce
that the function $\phi\colon\dzi u \ni v \mapsto
\|\slam e_v\| \in \cbb$ is unbounded, and thus the
operator $M_u$ in $\ell^2(\dzi u)$ of multiplication
by $\phi$ is unbounded (note that the function
$\lambdab^u\colon \dzi u \ni v \mapsto \lambda_v \in
\cbb$ does not belong to $\dz{M_u}$, and so the
definition \cite[(4.2.2)]{j-j-s} makes no sense).
Applying \cite[Theorem 4.2.2]{j-j-s}, we conclude that
$\dz{\slam^*} \nsubseteq \dz{\slam}$.
   \end{rem}
   \begin{rem}
It is worth pointing out that if $\tcal$ is a directed
tree such that $\card{\dzi{u}}=\aleph_0$ for every $u
\in V$, $\slam$ is a densely defined weighted shifts
on $\tcal$ with nonzero weights
$\lambdab=\{\lambda_v\}_{v\in V^\circ}$ such that
$\dz{\slam^2}=\{0\}$ (cf.\ Theorem \ref{sqr20}) and
$v_0 \in V^\circ$, then the weighted shift
$S_{\tilde{\lambdab}}$ on $\tcal$ with nonzero weights
$\tilde{\lambdab}=\{\tilde\lambda_v\}_{v \in V^\circ}$
given by
   \begin{align*}
\tilde\lambda_v =
   \begin{cases}
   \lambda_v & \text{ for } v\neq v_0,
\\
   \sqrt{1 + \|\slam e_{v}\|^2} & \text{ for } v=v_0,
   \end{cases}
   \end{align*}
is densely defined,
$\dz{\slam}=\dz{S_{\tilde{\lambdab}}}$ (use
\cite[Proposition 3.1.3(i)]{j-j-s}),
$\dz{\slam^*}=\dz{S_{\tilde{\lambdab}}^*}$ (use
\cite[Proposition 3.4.1(iv)]{j-j-s}),
$S_{\tilde{\lambdab}}$ is not hyponormal (use Theorem
\ref{hyp}) and $\dz{S_{\tilde{\lambdab}}^2}=\{0\}$
(use \eqref{dof2}). Hence, if $\slam$ is constructed
as in the proof of Theorem \ref{hypdz20}, then by
Remark \ref{dziedz2} we have
$\dz{S_{\tilde{\lambdab}}} \varsubsetneq
\dz{S_{\tilde{\lambdab}}^*}$.
   \end{rem}

\vspace{1ex}

   \textbf{Acknowledgement}. The substantial part of
this paper was written while the first and the third
authors visited Kyungpook National University during
the autumn of 2010 and the spring of 2011. They wish
to thank the faculty and the administration of this
unit for their warm hospitality.
   \bibliographystyle{amsalpha}
   
   \end{document}